\DeclareSymbolFont{cyrletters}{OT2}{wncyr}{m}{n}
\DeclareMathSymbol{\Sha}{\mathalpha}{cyrletters}{"58}
\newtheorem{thm}{Theorem}[section]
\newtheorem{lem}[thm]{Lemma}
\newtheorem{cor}[thm]{Corollary}
\theoremstyle{definition}
\newtheorem{opm}[thm]{Remark}
\newtheorem{defn}[thm]{Definition}
\numberwithin{equation}{section} \numberwithin{figure}{section}
\DeclareMathOperator{\prim}{prim}
\DeclareMathOperator{\Hom}{Hom}
\newcommand{\Qbar}{\overline{\QQ}}
\newcommand{\isomto}{\overset{\sim}{\longrightarrow}}
\newcommand\PP{\mathbb{P}}
\newcommand\QQ{\mathbb{Q}}
\newcommand\CC{\mathbb{C}}
\title[Belyi's theorem for complete intersections]{Belyi's theorem for   complete intersections of general type}
\author{A. Javanpeykar}
\subjclass{14D05, 14J10, 14E30, 14H10}
\keywords{MMP, rigidity of families of varieties, Shafarevich boundedness conjecture, varieties over $\Qbar$, Lefschetz pencils}
\address{Mathematical Institute \\ Leiden University\\
Leiden, Netherlands}
\email{ajavanp@math.leidenuniv.nl}
\begin{document}

\thispagestyle{empty}

\begin{abstract}
We give a Belyi-type characterisation of smooth complete intersections of general type over $\CC$ which can be defined over $\Qbar$.   Our proof uses the higher-dimensional analogue of the Shafarevich boundedness conjecture for families of canonically polarized varieties,  finiteness results for maps to varieties of general type, and rigidity theorems for Lefschetz pencils of complete intersections.
\end{abstract}

\maketitle

\section{Introduction}  
The aim of this paper is  to prove a Belyi-type characterisation of smooth complete intersections of general type over $\CC$ which can be defined over $\Qbar$. In other words, we generalize Belyi's theorem for curves to certain higher-dimensional varieties. To motivate this, let us briefly discuss applications of Belyi's theorem for curves.

Belyi's theorem for curves was famously used by Grothendieck to show that the action of the absolute Galois group of $\mathbb Q$ on the set of Galois dessins is faithful \cite[Theorem 4.7.7]{Szamuely}. This result started a flurry of activity on dessins d'enfants \cite{Schneps}. Subsequently, the action of the Galois group of $\mathbb Q$ on the set of connected components of the coarse moduli space of surfaces of general type was proven to be faithful by Bauer-Catanese-Grunwald \cite{BCG}; see also the work of Easton-Vakil \cite{Vakil} and Gonz\'alez-Diez--Torres-Teigell \cite{Torres}.  

To state our main theorem, let $\QQ\to \Qbar$ be the algebraic closure of $\QQ$ in $\CC$. Let $X$ be a smooth projective connected curve over $\CC$.  If $X$ can be defined over $\Qbar$, then Belyi proved that there exists a morphism $X\to \PP^1_{\CC}$ ramified over at most three points \cite{Belyi}. Conversely, by classical results of Weil-Grothendieck \cite{SGA1, Kock, Weil}, if there exists a non-constant morphism $X\to \PP^1_{\CC}$ ramified over precisely three points, then $X$ can be defined over $\Qbar$. In other words, $X$ can be defined over $\Qbar$ if and only if $X$ admits a rational function with at most three critical points.

The main result  of this paper states that a smooth complete intersection $X$ of general type over $\CC$ can be defined over $\Qbar$ if and only if $X$ admits a Lefschetz function with only three critical points (see  Definition \ref{defn: lefschetz}). Here by a Lefschetz function on $X$ is meant a rational function $X\dashrightarrow \PP^1_{\CC}$ on $X$ which factors via a Lefschetz pencil $X\dashrightarrow  \PP^1_{\CC}$ and a rational function $\PP^1_{\CC}\to\PP^1_{\CC}$ on the projective line.

\begin{thm}\label{thm: intro}
Let $X$ be a smooth  complete intersection of general type over $\CC$. Then the following are equivalent:
\begin{enumerate}
\item the variety $X$ can be defined over $\Qbar$, and
\item there exists a Lefschetz function $X\dashrightarrow \PP^1_\CC$ with at most three critical points.
\end{enumerate}
\end{thm}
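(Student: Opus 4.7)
The plan is to prove the two implications separately. The direction (1) $\Rightarrow$ (2) is the softer one and reduces to the classical Belyi theorem for $\PP^1$, while (2) $\Rightarrow$ (1) is the deeper direction and relies on the three ingredients advertised in the abstract: the higher-dimensional Shafarevich boundedness conjecture, finiteness of maps to varieties of general type, and rigidity of Lefschetz pencils of complete intersections.

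For (1) $\Rightarrow$ (2), I would start with a $\Qbar$-model $X_0 \subset \PP^N_{\Qbar}$ of $X$. The set of pencils on $X_0$ which are Lefschetz is a dense open subset of the appropriate Grassmannian of lines in the dual projective space, and this open subset has plenty of $\Qbar$-points, so I may pick a Lefschetz pencil $\phi_0 \colon X_0 \dashrightarrow \PP^1_{\Qbar}$ defined over $\Qbar$. Its critical values form a finite set $\Sigma_0 \subset \PP^1(\Qbar)$. Applying the classical Belyi theorem to $\Sigma_0$ produces a $\Qbar$-morphism $g_0 \colon \PP^1_{\Qbar} \to \PP^1_{\Qbar}$ with $\Sigma_0 \subseteq g_0^{-1}(\{0,1,\infty\})$ and critical values contained in $\{0,1,\infty\}$. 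Then $g_0 \circ \phi_0$, base-changed to $\CC$, is the desired Lefschetz function.

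For (2) $\Rightarrow$ (1), write $f = g \circ \phi$ and, after composing with a M\"obius transformation, assume the critical values of $f$ lie in $\{0,1,\infty\}$. In particular the critical values of $g$ lie in $\{0,1,\infty\}$, and the classical fact that covers of $\PP^1$ branched over three points are defined over $\Qbar$ (part of the Weil-Grothendieck theory cited in the introduction) provides a $\Qbar$-model of $g$; hence $g^{-1}(\{0,1,\infty\}) \subset \PP^1(\Qbar)$. Since the critical values $\Sigma$ of $\phi$ must land in $g^{-1}(\{0,1,\infty\})$, the open curve $U := \PP^1_{\CC} \setminus \Sigma$ is defined over $\Qbar$. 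Resolving the base locus of $\phi$ and restricting to $U$, I obtain a smooth projective family $\mathcal{Y} \to U$ whose fibres are smooth complete intersections; by adjunction and the general-type hypothesis on $X$, these fibres are canonically polarized. Invoking the higher-dimensional Shafarevich boundedness conjecture, the set of isomorphism classes of such families over $U_{\Qbar}$ with fixed Hilbert polynomial is finite, so the corresponding moduli space is of finite type and zero-dimensional over $\Qbar$, and $\mathcal{Y}$ descends to a family $\mathcal{Y}_0 \to U_{\Qbar}$ over $\Qbar$.

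To conclude, I would use the rigidity theorem for Lefschetz pencils of complete intersections (in combination with finiteness of maps to varieties of general type) to show that $\mathcal{Y}_0$ extends uniquely to the total space of a Lefschetz pencil $\widetilde{X}_0 \to \PP^1_{\Qbar}$ defined over $\Qbar$. Since $\widetilde{X}_0$ is the blow-up of $X_0$ along a smooth base locus and $X$ is of general type, MMP applied to $\widetilde{X}_0$ yields the canonical model, which is $X_0$; this gives the desired $\Qbar$-model of $X$. The main obstacle lies precisely in this last step: Shafarevich boundedness alone only delivers the smooth family $\mathcal{Y}_0$ over $\Qbar$, and one must then argue, using rigidity of pencils and finiteness of maps, that the full Lefschetz pencil (and hence the birational model $\widetilde{X}_0$) descends to $\Qbar$, before running MMP to recover $X_0$. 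Coordinating Shafarevich boundedness, rigidity of pencils, finiteness of maps to general-type targets, and MMP is the technical heart of the proof.
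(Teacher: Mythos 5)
Your (1) $\Rightarrow$ (2) direction matches the paper's argument exactly: pick a Lefschetz pencil over $\Qbar$ (possible because the Lefschetz locus in the dual Grassmannian is a dense open over $\Qbar$), then postcompose with a Belyi map. Your explicit Weil--Grothendieck argument for why, in a three-critical-point Lefschetz function $p = g \circ \phi$, the map $g$ is defined over $\Qbar$ and hence the critical values of $\phi$ land in $\PP^1(\Qbar)$ is actually \emph{more} detailed than the paper, which compresses this into the implication $(4)\Rightarrow(5)$ of Theorem \ref{thm: main theorem}.

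The gap is in the (2) $\Rightarrow$ (1) direction, specifically in how you apportion the work between boundedness and rigidity. You write that ``invoking the higher-dimensional Shafarevich boundedness conjecture, the set of isomorphism classes of such families over $U_{\Qbar}$ with fixed Hilbert polynomial is finite.'' This is not what Theorem \ref{thm: kovacslieblich} asserts: it gives finitely many \emph{deformation types}, not finitely many isomorphism classes. A single deformation type can contain a positive-dimensional (hence uncountable, over $\CC$) family of pairwise non-isomorphic families --- this is exactly the phenomenon exhibited by the non-rigid families of hypersurfaces in \cite[Theorem 0.3]{VZ4}, which the paper cites in Section \ref{section: shaf bound}. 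The rigidity theorem for Lefschetz pencils of complete intersections (Theorem \ref{thm: rigidity of Lefschetz}) is what collapses each deformation type to a single isomorphism class, and this is precisely the step that yields Corollary \ref{cor: hypersurfaces defn} (finitely many $B$-isomorphism classes of Lefschetz fibrations of a given type, smooth over a given open). You instead invoke rigidity for a later ``extension from $U$ to $\PP^1$'' step, but that step is not where rigidity is needed: the paper avoids it entirely by applying the finiteness criterion of Gonz\'alez-Diez (Lemma \ref{lem: varieties over nf} / \ref{lem: morphisms over nf}) directly to the Lefschetz \emph{fibration} $Y \to \PP^1$, whose $\mathrm{Aut}(\CC)$-conjugates are again Lefschetz fibrations of the same type smooth over $\Qbar$-conjugates of the same punctured line, so that Corollary \ref{cor: lefschetz pencils} bounds them. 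As written, your argument would not close without repositioning rigidity between boundedness and descent.

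Your final step --- recovering $X_0$ from $\widetilde{X}_0$ by passing to the canonical model --- is a legitimate alternative to the paper's Lemma \ref{lem: dominant}, and for smooth complete intersections of general type (which are canonically polarized by adjunction) it is quite clean: $X = \operatorname{Proj}$ of the canonical ring of $\widetilde{X}$, which is defined over $\Qbar$ once $\widetilde{X}$ is. The paper instead goes through the Kobayashi--Ochiai finiteness of maps to varieties of general type together with Theorem \ref{thm: mmp}, which is more general (it handles any birational domination $Y \to X$ with $X$ of general type, not just blow-downs of Lefschetz base loci) and deliberately sidesteps relying on MMP in high dimension, as discussed in Remark \ref{remark}. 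Either route is acceptable here; the substantive issue remains the boundedness/rigidity conflation above.
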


For two-dimensional varieties, our theorem follows from a result of  Gonz\'alez-Diez \cite[Theorem 1]{Gon1}. Our restriction to complete intersections is necessary to prove the rigidity of Lefschetz functions (see Section \ref{section:rigidity}). Moreover, our restriction to varieties of general type allows us to use results from the MMP specific to varieties of general type (see Theorem \ref{thm: mmp}). It also allows us to invoke boundedness results for families of canonically polarized varieties (Theorem \ref{thm: kovacslieblich}).  

  To prove Theorem \ref{thm: intro} we follow closely the strategy of Gonz\'alez-Diez.  Indeed, the proof of Gonz\'alez-Diez combines 1) well-known finiteness results for minimal models of surfaces with 2) boundedness and 3) rigidity theorems due to Arakelov \cite{ArakelovShaf} and Parshin \cite{Parshin3, Parshin} for families of curves.  Although the boundedness of families of canonically polarized varieties has been proven by Kov\'acs-Lieblich \cite{KovacsLieblich}, generalizing the strategy of Gonz\'alez-Diez to higher-dimensional varieties of general type poses several technical difficulties which we briefly discuss now.

For instance, the rigidity results for families of curves of genus at least two invoked by Gonz\'alez-Diez do not hold in general, as they fail  for several families of higher-dimensional varieties (see \cite[Example 3.1]{KovacsStrong}). To remedy the situation, our novel contribution is a rigidity theorem for certain Lefschetz pencils (Theorem \ref{thm: rigidity of Lefschetz}). In the proof of this result we    use the theory of Higgs bundles \cite{Simpson2, Simpson1}. 

In dimension at least five, the termination of flips is not known currently and complicates proving the appropriate finiteness results for minimal models of varieties of general type (as used by Gonz\'alez-Diez for two-dimensional varieties). To circumvent these difficulties we instead use a classical theorem of Kobayashi-Ochiai \cite{KO} to prove a higher-dimensional analogue (Theorem \ref{thm: mmp}) of the finiteness results used by Gonz\'alez-Diez.

\subsection*{Acknowledgements} We would like to gratefully thank Chenyang Xu for his help in writing Section \ref{section: mmp}, and especially for providing us with a proof of Theorem \ref{thm: mmp}. We thank Chris Peters for a valuable discussion. We also thank  S\'andor Kov\'acs, Duco van Straten and Kang Zuo for inspiring discussions. We thank Jean-Beno\^it Bost,  Yohan Brunebare, Bas Edixhoven, David Holmes,  Robin de Jong, Daniel Loughran and John Voight for helpful discussions on the Shafarevich conjecture and Belyi's theorem.  We gratefully acknowledge the support of   SFB/Transregio 45. 

\subsection*{Conventions} 
The base field is an algebraically closed field $k$ of characteristic zero. 

A variety over a field $k$ is an integral quasi-projective scheme over $k$. A curve is a one-dimensional variety. A smooth projective variety $X$ is of \textit{general type} if  the Kodaira dimension of (the canonical bundle of) $X$ equals $\dim X$. A projective variety $X$ is of \textit{general type} if, for some (hence any) desingularization $X^\prime\to X$ with $X^\prime$  projective, the smooth projective variety $X^\prime$ is of general type. 

The group of automorphisms of the field of complex numbers $\CC$ is denoted by $\mathrm{Aut}(\CC)$.

\section{Finiteness results}
 In this section we first establish a finiteness result for varieties of general type dominated birationally by a fixed variety; see Theorem \ref{thm: mmp}.  To prove this finiteness result, we will use basic results in the minimal model program  such as Mori's cone theorem.

Next we state a finiteness result (due to Kov\'acs-Lieblich) for the number of deformation types of a family of canonically polarized varieties over a fixed base space; see Theorem \ref{thm: kovacslieblich}. This theorem has a long history going back to Shafarevich and we refer to \cite{KovacsSurvey} for a discussion of this.  

 Finally, following the strategy of Viehweg-Zuo \cite{VZ4} and Yi Zhang \cite{YiZhang}, we prove a rigidity theorem for certain Lefschetz fibrations; see Theorem \ref{thm: rigidity of Lefschetz}.

\subsection{An application of the minimal model program}\label{section: mmp}

Our first finiteness result (Theorem \ref{thm: mmp}) concerns varieties of general type dominated birationally by a fixed variety.  
\begin{lem}\label{lem: mmp}
Let $X$ be a smooth projective variety. Let $H$ be an ample line bundle on $X$ and $c$ a real number. Then, there are only finitely many numerical equivalence classes $[L]\in \mathrm{N}_1(X)$ of big and nef line bundles $L$ on $X$ such that \[L^{\dim X-1}\cdot H\leq c.\]
\end{lem}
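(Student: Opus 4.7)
The plan is to reduce the claim to a linear bound on $L \cdot H^{n-1}$ and then to a lattice-point count. Write $n = \dim X$ and, for each big and nef line bundle $L$ on $X$, set $s_i := L^i \cdot H^{n-i}$ for $0 \leq i \leq n$. The hypothesis reads $s_{n-1} \leq c$, while the other inputs are $s_0 = H^n > 0$ (since $H$ is ample) and $s_n = L^n \geq 1$ (since $L$ is big and its numerical class is integral). The Khovanskii--Teissier inequalities assert that the sequence $(s_i)_{i=0}^n$ is log-concave, i.e.\ $s_i^2 \geq s_{i-1} s_{i+1}$. A short induction from the positivity of $s_0$ and $s_n$ forces $s_i > 0$ for every $i$, so the ratios $r_i := s_i/s_{i+1}$ are well-defined and, by log-concavity, nondecreasing in $i$. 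Telescoping then yields
\[
s_1 \;=\; s_n \cdot r_1 r_2 \cdots r_{n-1} \;\leq\; s_n \cdot r_{n-1}^{n-1} \;=\; \frac{s_{n-1}^{n-1}}{s_n^{n-2}} \;\leq\; c^{n-1}.
\]
Hence it suffices, for every constant $C$, to bound the number of nef integral classes satisfying the linear inequality $L \cdot H^{n-1} \leq C$.

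For this second step, the plan is to show that the slice of the nef cone inside $\NS(X) \otimes \RR$ cut out by the half-space $\{D : D \cdot H^{n-1} \leq C\}$ is already bounded. The linear functional $\phi(D) := D \cdot H^{n-1}$ is nonnegative on the closed convex cone $\mathrm{Nef}(X)$; if the slice were unbounded, there would exist nef classes $L_k$ with $\phi(L_k) \leq C$ and $\|L_k\| \to \infty$, and normalising and extracting a convergent subsequence would produce a nonzero nef class $M$ with $\phi(M) = 0$. Once I know that $\phi$ is strictly positive on $\mathrm{Nef}(X) \setminus \{0\}$, this contradiction proves boundedness. Then, since $\NS(X)$ is finitely generated (and so embeds as a lattice in $\NS(X) \otimes \RR$ modulo torsion), its intersection with a bounded set is finite, which completes the proof.

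The nontrivial ingredient, which I expect to be the main obstacle, is the strict positivity of $\phi$ on $\mathrm{Nef}(X) \setminus \{0\}$. I would prove it using the Hodge index theorem applied to the symmetric bilinear form $q(D, E) := D \cdot E \cdot H^{n-2}$ on $\NS(X) \otimes \RR$, which by the hard Lefschetz theorem has signature $(1, \rho-1)$ with $H$ spanning the positive line. If $M$ is nef and $\phi(M) = M \cdot H^{n-1} = q(M, H) = 0$, then $M$ lies in $H^{\perp_q}$, on which $q$ is negative definite, so $q(M,M) = M^2 \cdot H^{n-2} \leq 0$; but nefness of $M$ forces $M^2 \cdot H^{n-2} \geq 0$, and negative definiteness then forces $M \equiv 0$. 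Alternatively and more elementarily, one can reduce to the two-dimensional case via successive Lefschetz hyperplane sections $S := H_1 \cap \cdots \cap H_{n-2}$ with $H_i \in |mH|$, invoking Lefschetz injectivity $\NS(X) \hookrightarrow \NS(S)$ in Picard rank to reduce the positivity statement to the classical Hodge index theorem on the surface $S$.
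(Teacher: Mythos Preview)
Your argument is correct and genuinely different from the paper's. Both proofs begin with the Khovanskii--Teissier/Hodge-index log-concavity of the sequence $H^i\cdot L^{n-i}$, but they diverge in how they cash this in. You telescope to bound the \emph{linear} degree $L\cdot H^{n-1}$, and then finish by showing that the functional $D\mapsto D\cdot H^{n-1}$ is strictly positive on $\mathrm{Nef}(X)\setminus\{0\}$ (via the signature of $D\cdot E\cdot H^{n-2}$), so that the relevant slice of the nef cone is a bounded region of the lattice $N^1(X)$. The paper instead uses the same inequalities to bound $(H+L)^n$ and $(H+L)^{n-1}\cdot K_X$, and then invokes Matsusaka's big theorem to conclude that $N(H+L)$ is very ample for a uniform $N$, whence only finitely many numerical classes can occur.

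Your route is more elementary: it replaces Matsusaka's theorem by the finite generation of the N\'eron--Severi group together with a short Hodge-index argument, and it makes transparent that the finiteness is really a lattice-point count in a compact slice of the nef cone. The paper's route, by contrast, produces the extra information that $H+L$ becomes uniformly very ample, which is stronger than needed here but is closer in spirit to the boundedness statements used later in the paper. One small remark: your telescoping step uses $s_n=L^n\geq 1$, so your bound $s_1\leq c^{n-1}$ tacitly assumes $n\geq 2$; this is harmless since the lemma as stated is vacuous or false for curves and is only applied in higher dimension.
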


\begin{proof} Write $n=\dim X$. Replacing $H$ by a positive multiple if necessary, we may and do assume that $H-K_X$ is ample. Let $L$ be a big and nef line bundle on $X$ such that $L^{n -1} \cdot H \leq c.$  As the line bundle $H-K_X + L$ is ample,   \begin{eqnarray}\label{eqn} 
(H+L)^{n-1}\cdot K_X &<& (H+L)^n = \sum_{i=0}^n \binom{n}{i} H^i \cdot L^{n-i}.
\end{eqnarray}  By the Hodge index theorem \cite[Proposition 2.5.1]{BelSom}, for all $i = 1,\ldots,n$, the inequality \[ H^i\cdot L^{n-i} \leq (H\cdot L^{n-1})^i \] holds. As $L^n \leq H \cdot L^{n-1}$, by (\ref{eqn}), we conclude that 
\begin{eqnarray*}
	 (H+L)^{n-1} \cdot K_X &\leq & H\cdot L^{n-1} + \sum_{i=1}^n \binom{n}{i} (H\cdot L^{n-1})^i \\ &\leq& c+ \sum_{i=1}^n \binom{n}{i} c^i \\ &=& c\left(\frac{c^n+c -2 }{c-1} \right).
	 \end{eqnarray*}
 
Matsusaka's big theorem   \cite{KollarMat} 
and the above inequality imply   that there exists a positive integer $N  = N(c,n)$ depending only on $c$ and $n$ such that $N(H+L)$ is very ample. Thus, there is an integer $d$ (depending only on $c$, $H$ and $n$) such that $N(H+L)$ defines a closed immersion $f:X\to \PP^d$ with $f^\ast \mathcal O_{\PP^d}(1) = N(H+L)$. Therefore, the set of numerical equivalence classes of $N(H+L)$, as $L$ runs through all big and nef line bundles on $X$ such that $L^{n-1}\cdot H\leq c$, is finite. This concludes the proof of the lemma.
\end{proof}

As an application of Lemma \ref{lem: mmp} we now prove a finiteness statement concerning varieties of general type dominated birationally by a fixed variety.

\begin{thm}\label{thm: mmp} 
Let $Y$ be a smooth projective variety. Let $n$ be a positive integer,  $D$ an ample line bundle on $\mathbb P^n_k$ and $d$ an integer.  Then the set of isomorphism classes of smooth projective varieties of general type $X$  such that there exists a (proper surjective) birational morphism $Y\to X$ and a closed immersion $f:X\to \mathbb P^n_{k}$ of $D$-degree $d$ is finite. 
\end{thm}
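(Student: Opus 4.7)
The plan is to apply Lemma~\ref{lem: mmp} to $Y$ by attaching to each triple $(X, \pi, f)$ the big and nef line bundle $L := \pi^{\ast} f^{\ast} D$ on $Y$, and then to recover the isomorphism class of $X$ from the numerical class $[L] \in \mathrm{N}^{1}(Y)$. Write $N = \dim Y = \dim X$. Because $f^{\ast} D$ is ample on $X$ and $\pi$ is birational of degree one, $L$ is big and nef on $Y$, and the projection formula yields $L^{N} = (f^{\ast}D)^{N} = d$.

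The central step will be to establish a uniform upper bound $L^{N-1} \cdot H \leq c$, where $H$ is any ample line bundle on $Y$ fixed once and for all and $c$ depends only on $Y, H, D, d, n$. Once such a bound is in hand, Lemma~\ref{lem: mmp} immediately implies that $[L]$ takes only finitely many values as the triple $(X, \pi, f)$ varies.

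From each numerical class $[L]$ one recovers $X$ up to only finitely many isomorphism classes. Indeed, $L$ is semiample (being the pullback of an ample bundle), so its Iitaka model $\Proj\, \bigoplus_{m \geq 0} H^{0}(Y, mL)$ coincides with $\Proj\, \bigoplus_{m \geq 0} H^{0}(X, m f^{\ast} D)$ and is canonically isomorphic to $X$. Since the birational invariance of $\Pic^{0}$ for smooth projective varieties makes $\pi^{\ast}\colon \Pic^{0}(X) \to \Pic^{0}(Y)$ an isomorphism, translating $L$ by an element of $\Pic^{0}(Y)$ leaves the Iitaka model unchanged; the only residual ambiguity comes from the torsion in $\NS(Y)$, which is finite. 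Combining this with the finiteness of $[L]$ closes the argument.

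The main obstacle is the uniform bound $L^{N-1} \cdot H \leq c$. By the projection formula, $L^{N-1} \cdot H = (f^{\ast} D)^{N-1} \cdot \pi_{\ast} H$, which is the $D$-degree in $\PP^{n}$ of the divisor $f_{\ast} \pi_{\ast} H$. The components of $\pi_{\ast} H$ are the images of the non-exceptional components of the fixed divisor $H$, with multiplicities bounded by those of $H$, but the $D$-degrees of these images in $\PP^{n}$ vary with $X$ and are not controlled a priori. To achieve the bound, I would exploit that $X$ is of general type: a Kobayashi-Ochiai-style finiteness theorem restricts $X$ to lie in only finitely many birational equivalence classes of general-type varieties dominated by $Y$, and within each such class Matsusaka's big theorem (applied with the fixed Kodaira volume, equal by birational invariance to that of $Y$, and the fixed top intersection $d$) controls the relevant polarization degrees. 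This interplay of Kobayashi-Ochiai rigidity with effective very-ampleness is the technical heart of the proof.
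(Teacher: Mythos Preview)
Your overall strategy matches the paper's: pull back an ample bundle from $X$ to get a big nef line bundle on $Y$, bound its intersection with a fixed ample $H$, and invoke Lemma~\ref{lem: mmp}. Your step recovering $X$ from the numerical class $[L]$ (via the Iitaka model, $\Pic^{0}$-invariance, and finite torsion in $\NS(Y)$) is actually spelled out more carefully than in the paper, which simply asserts that the bound ``suffices by Lemma~\ref{lem: mmp}.''

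The genuine gap is in your bound $L^{N-1}\cdot H\le c$. Your appeal to a ``Kobayashi--Ochiai-style finiteness theorem'' does no work here. Kobayashi--Ochiai bounds the set of dominant maps from $Y$ to a \emph{fixed} target of general type; it does not bound the set of possible targets. And since every $X$ in question is birational to $Y$, ``finitely many birational equivalence classes'' is trivially satisfied (there is exactly one) and gives no control on $\deg_{D}(f_{\ast}\pi_{\ast}H)$. The follow-up invocation of Matsusaka with ``fixed Kodaira volume'' is likewise circular: Matsusaka is what underlies Lemma~\ref{lem: mmp}, and it needs precisely the intersection bound you are trying to establish as input, not as output.

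The idea you are missing, and which the paper supplies, is to trade $H$ for the canonical class. One first reduces to $Y$ of general type; then there is an integer $m$ (depending only on $Y$ and $H$) with $H\le mK_{Y}$. Writing $\phi\colon Y\to X$ and $L=(\dim Y+2)\,f^{\ast}\mathcal{O}(1)$, one gets
\[
(\phi^{\ast}L)^{N-1}\cdot H \;\le\; m\,(\phi^{\ast}L)^{N-1}\cdot K_{Y}
\;=\; m\,L^{N-1}\cdot K_{X},
\]
the last equality because $K_{Y}=\phi^{\ast}K_{X}+E$ with $E$ $\phi$-exceptional, so the $E$-contribution vanishes against classes pulled back from $X$. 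Now $L^{N-1}\cdot K_{X}$ is an intersection number computed entirely on $X\subset\PP^{n}$ and is bounded in terms of $n$, $D$, $d$ (and $\dim Y$). The point is that pushing $K_{Y}$ down gives $K_{X}$, whose degree is controlled by the fixed embedding data, whereas pushing your arbitrary $H$ down gives a divisor whose degree in $\PP^{n}$ you cannot control --- exactly the obstacle you identified but did not overcome.
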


\begin{proof} To prove the theorem, we may and do assume that $Y$ is of general type. Moreover, to ease the notation, we let $\mathcal O(1) := \mathcal O_{\mathbb P^n_k}(1)$.

Since $f^\ast \mathcal O(1)$ is ample on $X$, by Mori's cone theorem (\cite[Theorem 6.1]{Debarre} or \cite[Theorem 3.7]{KollarMori}), the line bundle  $K_X + (\dim Y + 2)f^\ast \mathcal O(1)$ is ample (as $\dim Y+2 = \dim X+2 > \dim X +1$). Write $L = (\dim Y+2) f^\ast \mathcal O(1)$.

Fix an effective ample divisor $H$ on $Y$. Then, by Lemma \ref{lem: mmp}, to prove the theorem, it suffices to show that  $(\phi^\ast L)^{\dim Y  -1} \cdot H$ is bounded by a real number depending only on $Y$,  $H$, $n$, $D$ and $d$. 

To do so, note that since $Y$ is of general type, by \cite[Proposition 2.2.7]{Lazarsfeld}, there exists a positive integer $m$ with  \[m K_Y \geq H.\] 
Here $m$ depends only on $Y$ and $H$. It is straightforward to see that \[(\phi^\ast L)^{\dim Y -1} \cdot H \leq m (\phi^\ast L)^{\dim Y - 1} \cdot K_Y  = m L^{\dim Y-1} \cdot K_X.\] Write $N = \dim Y+ 2 = \dim X+2$. By assumption, the real number   
\begin{align*}
m L^{\dim Y-1} \cdot K_X & = m N^{\dim Y -1} (f^\ast \mathcal O(1))^{\dim Y-1} \cdot K_X  \\ 
&= mN^{\dim Y - 1} \mathcal O(1)^{\dim Y-1}\cdot (f_\ast K_X)
\end{align*} is (bounded by) a real number depending only on $Y$, $H$, $n$, $D$ and $d$. This concludes the proof.
\end{proof}

\begin{opm}\label{remark}
It seems reasonable to suspect that, for $X$  a smooth projective variety, the set of $k$-isomorphism classes of smooth projective varieties of general type $Y$ over $k$ such that there exists a (proper surjective) birational morphism $Y\to X$ is finite. In other words, the conclusion of Theorem \ref{thm: mmp} should hold without the assumption that the $X$ form a bounded family. Indeed, if $\dim Y \leq 3$, this is \cite[Theorem 4]{Tsai}. If $\dim Y =4$,  using the current state-of-the-art in the MMP, the proof of Tsai (given in \textit{loc. cit.}) also works by the MMP for varieties of general type (Birkar-Cascini-Hacon-McKernan  \cite[Theorem B, p. 11]{BCHM}) and the termination of flips in dimension four (Hacon-McKernan-Xu \cite[Corollary 1.2]{HMX}). Thus, in general,  ``termination of flips''  implies that the conclusion of Theorem 2.2 holds without the assumption that the $X$ form a bounded family.
\end{opm}

\subsection{Shafarevich's boundedness conjecture}\label{section: shaf bound}
A smooth projective variety over a field $k$ is \textit{canonically polarized} if  its canonical bundle  $\omega_{X/k}$  is ample.  If $X$ is canonically polarized, the \textit{Hilbert polynomial of $X$} is the Hilbert polynomial of $\omega_{X/K}$. Note that canonically polarized varieties are of general type.

Let $B$ be a  variety over an algebraically closed field $k$ of characteristic zero. A smooth projective morphism of $k$-varieties $f:Y\to B$ is a \textit{family over $B$} if it has (geometrically) connected fibres. If $f_1:Y_1\to B$ and $f_2:Y_2\to B$ are families over $B$, a \textit{morphism from $f_1:Y_1\to B$ to $f_1:Y_2\to B$} is a morphism of $k$-varieties $\phi:Y_1\to Y_2$ such that $f_1 = f_2 \circ \phi$.

If the fibres of a family $f:Y\to B$ over $B$ are canonically polarized we say that  $f$ is a \textit{family of canonically polarized varieties}. Moreover, the \textit{Hilbert polynomial of $f:Y\to B$} is the Hilbert polynomial of the canonically polarized variety $Y_b$, where $b$ is some (hence any) closed point of $B$.

 A family $f:Y\to B$ over $B$ is \textit{trivial} if there is a (smooth projective) variety $F$ over $\CC$ such that $Y$ is isomorphic to $F\times_{k} B$ over $B$. A family $f:Y\to B$ is \textit{isotrivial} if all of its closed fibres are isomorphic. If $f:Y\to B$ is a family of canonically polarized varieties over a smooth curve $B$, then $f$ is isotrivial if and only if there exists a finite \'etale morphism $C\to B$ such that $Y\times_B C\to C$ is a trivial family over $C$ \cite[Lemma 7.3]{KebekusKovacs3}. 

Let $h$ be a polynomial. If $f:Y\to B$ is a family of canonically polarized varieties with Hilbert polynomial $h$, a \textit{deformation of $Y\to B$ (as a family over $B$)} is a triple $(T,t_0, \psi)$, where $T$ is a connected variety over $k$, $t_0$ is a closed point of $T$ and $\psi:\mathcal Y\to B\times_k T$ is a family of canonically polarized varieties with Hilbert polynomial $h$ over $B\times_k T$ such that $f:Y\to B$ is isomorphic to the family $\mathcal Y_{t_0}\to B\times_k \{t_0\} \cong B$ over $B$. If $f_1:Y_1\to B$ and $f_2:Y_2\to B$ are families of canonically polarized varieties with Hilbert polynomial $h$, then $f_1$ and $f_2$ are \textit{deformation equivalent (as families of canonically polarized varieties with Hilbert polynomial $h$ over $B$)} if there is a deformation $(T,t_0, \psi)$ of $f_1:Y_1\to B$ and a closed point $t\in T$ such that   $f_2:Y_2\to B$ is isomorphic to the family $\mathcal Y_{t}\to B\times_k \{t\} \cong B$. This defines an equivalence relation on the set of families $Y\to B$ of canonically polarized varieties with Hilbert polynomial $h$. An equivalence class (with respect to this equivalence relation) is called a \textit{deformation type}. With these definitions,  the work of Viehweg-Zuo \cite{VZ1, VZ2, VZ5} on Arakelov inequalities (or ``weak boundedness'') was used by Kov\'acs-Lieblich \cite{KovacsLieblich} to  prove the following analogue of the Shafarevich boundedness conjecture for families of canonically polarized varieties:

\begin{thm}\label{thm: kovacslieblich} \cite[Theorem 1.6]{KovacsLieblich}
Let $B$ be a smooth variety over $\CC$ and $h$ a polynomial. Then the set of  deformation types of non-isotrivial families of canonically polarized varieties $Y\to B$  over $B$ with Hilbert polynomial $h$ is finite.\qed
\end{thm}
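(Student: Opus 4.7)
The plan is to reduce the finiteness of deformation types to the finiteness of connected components of a bounded Hom-scheme parametrising classifying maps to a moduli space of canonically polarized varieties.

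First I would fix a smooth projective compactification $\overline{B}$ of $B$ such that $\Delta := \overline{B}\setminus B$ is a simple normal crossings divisor. By the existence of a quasi-projective coarse moduli space $M_h$ of canonically polarized varieties with Hilbert polynomial $h$ (Viehweg), each family $f\colon Y\to B$ determines a moduli map $\mu_f\colon B\to M_h$, well-defined up to the finite ambiguity coming from fibrewise automorphisms. After passing once and for all to a fixed level structure (as in Popp--Viehweg) this ambiguity disappears, and the assignment $f\mapsto \mu_f$ sends deformation-equivalent families to moduli maps lying in the same connected component of $\Hom(B,M_h)$, and conversely. It therefore suffices to show that only finitely many connected components of $\Hom(B,M_h)$ contain moduli maps arising from non-isotrivial families.

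Next I would invoke the Arakelov-type weak boundedness inequalities of Viehweg--Zuo: there exist an ample line bundle $\lambda$ on a suitable compactification $\overline{M_h}$ (for instance a power of a determinant of Hodge sheaves, or the CM line bundle) and a constant $C=C(\overline{B},\Delta,h)$ such that for every non-isotrivial family $f\colon Y\to B$ the extension $\overline{\mu_f}\colon \overline{B}\to \overline{M_h}$ satisfies
\[
\deg\bigl(\overline{\mu_f}^{\ast}\lambda\bigr)\leq C.
\]
Fixing an ample polarization on $\overline{B}$, this bounds the bidegree, and hence the Hilbert polynomial, of the graph of $\overline{\mu_f}$ inside $\overline{B}\times \overline{M_h}$. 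Standard Hilbert-scheme theory then implies that the relevant locus in $\Hom(\overline{B},\overline{M_h})$ is a quasi-projective scheme, and therefore has only finitely many connected components. Combined with the first paragraph this yields finiteness of deformation types.

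The hard part is the second step: producing a line bundle on the moduli space whose pullback degree genuinely obstructs isotriviality, and bounding that degree in terms only of $(\overline{B},\Delta,h)$. This is the Viehweg--Zuo weak boundedness theorem, which rests on the positivity of direct images of pluricanonical sheaves and on subtle Higgs-bundle estimates near the boundary $\Delta$ controlling the failure of semistability. A secondary but nontrivial technical point is matching the notion of deformation equivalence of families over $B$ with connected components of a Hom-scheme into a scheme-theoretic moduli space; this requires either a careful stack-theoretic formulation or the level-structure workaround above, together with a proof that limits of flat families of canonically polarized varieties remain canonically polarized (which follows, after a finite base change and semistable reduction, from the properness of the moduli stack).
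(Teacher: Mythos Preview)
The paper does not prove this theorem at all: it is quoted verbatim from Kov\'acs--Lieblich \cite[Theorem~1.6]{KovacsLieblich} and marked with a \texttt{\textbackslash qed}, so there is no proof in the paper to compare against. The result is used purely as a black box in the proof of Corollary~\ref{cor: hypersurfaces defn}.

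That said, your outline is a fair high-level summary of the Kov\'acs--Lieblich strategy: bound the degree of the moduli map by Viehweg--Zuo weak boundedness, then use Hom-scheme/Hilbert-scheme finiteness of components. Two caveats if you want to turn this into an actual proof. First, the target $M_h$ is only a coarse moduli space (or a Deligne--Mumford stack), so the passage ``deformation types $\leftrightarrow$ connected components of $\Hom(B,M_h)$'' is not literally correct at the scheme level; Kov\'acs--Lieblich work with the stack and its coarse space carefully, and the level-structure trick you mention requires justification that a uniform level works for all families at once. Second, extending $\mu_f$ to a morphism $\overline{\mu_f}\colon \overline{B}\to \overline{M_h}$ across a higher-dimensional SNC boundary is not automatic; one typically only gets a rational map, and controlling the Hilbert polynomial of its graph (or of a resolution) is part of the work. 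These are exactly the ``hard parts'' you flag, so your self-assessment is accurate.
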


Let $f:Y\to B$ be a family of canonically polarized varieties.  We say that $f: Y\to B$ is \emph{rigid} if any deformation $(T,t_0,\psi)$ of $f$ has the property that, for all $t$ in $T$, the family $\psi_t:\mathcal Y \to B\times \{t\} \cong B$ is isomorphic to $f:Y\to B$. In other words, any deformation of the family $Y\to B$ is ``trivial''.  If $\dim X - \dim B = 1$, then $f$ is rigid \cite{ArakelovShaf}. If $\dim X - \dim B>1$, it is easy to construct non-rigid families; see \cite[Example 3.1]{KovacsStrong} or \cite{Patakfalvi}.
In \cite[Theorem 0.3]{VZ4}, it is shown that for all $n\geq 2$ and $d\geq n+2$, there exists a smooth variety $B$ and a non-rigid family of canonically polarized varieties $X\to B$ whose fibers are degree $d$ hypersurfaces in $\mathbb P^{n+1}$.  

As we have explained above,  there are non-rigid  families of hypersurfaces (and complete intersections). We will show in the next section how to prove the rigidity of a Lefschetz fibration of complete intersections, under suitable assumptions.

\subsection{Rigidity of Lefschetz fibrations}\label{section:rigidity}

In \cite{VZ5} Viehweg and Zuo prove the rigidity of certain families of hypersurfaces by studying the effect of a deformation on the associated variation of Hodge structures. Their strategy was  applied in the context of Lefschetz pencils on Calabi-Yau varieties in \cite[Theorem 4.1]{YiZhang}. In this section we follow the strategy of Viehweg-Zuo to prove the rigidity of  Lefschetz pencils of smooth complete intersections (Theorem \ref{thm: rigidity of Lefschetz}).
 


Any smooth projective variety $X$ over $k$ admits a Lefschetz pencil \cite[Expos\'e XVII, Th\'eor\`eme 2.5]{SGA7II}. Note that a Lefschetz pencil on $X$ induces a rational function $X\dashrightarrow \PP^1_k$. A flat proper morphism $f:Y\to \PP^1_k$ is a \emph{Lefschetz fibration} if there exist a   Lefschetz pencil $X\dashrightarrow \mathbb P^1_k$ and a birational morphism $Y\to X$  such that $Y\to \mathbb P^1_k$ factors via  $X\dashrightarrow \PP^1_k$ and $Y\to X$ is the blow-up of $X$ along the axes of the Lefschetz pencil. We will consider only Lefschetz pencils of complete intersections. Let us be more precise.


 A \emph{type} is a collection of integers $T=(d_1,\ldots,d_c;n),$
    which satisfy the inequalities
    $n \geq1,   c\geq1,$, and $ 2\leq d_1\leq \ldots \leq d_c.$
    A \textit{complete intersection of type $T$ over $k$} is a closed subscheme of codimension $c$ in $\PP^{n+c}_k$ 
    given as the zero locus of $c$ homogeneous equations of degrees $d_1,\ldots,d_c$.

 A family of complete intersections is rigid if any deformation of the family (as a family of complete intersections) is trivial.   We now use the theory of Higgs bundles \cite{Simpson2, Simpson1} and prove that any Lefschetz pencil of smooth complete intersections is rigid, under suitable assumptions. 

\begin{thm}\label{thm: rigidity of Lefschetz}   Let $T$ be a type such that $T \neq (2;n), (2,2;n), (3;2)$. 
If  $f:Y\to \PP^1_k$ is a Lefschetz fibration of smooth complete intersections of type $T$, then  $f$ is rigid.  
\end{thm}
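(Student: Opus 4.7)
The plan is to follow the Viehweg-Zuo strategy via Higgs bundles, as adapted to Lefschetz pencils by Yi Zhang. Let $S\subset \PP^1_k$ denote the discriminant of $f$ (the finite set of critical values) and put $U = \PP^1_k\setminus S$. Over $U$ the morphism $f$ is smooth, so the primitive middle-dimensional cohomology of the fibres carries a polarized variation of Hodge structure $\mathbb{V}$; via Schmid's canonical extension and Simpson's correspondence, $\mathbb{V}$ produces a logarithmic Higgs bundle
\[(E,\theta) = \bigoplus_{p+q = n}(E^{p,q},\theta^{p,q}),\qquad \theta^{p,q}\colon E^{p,q}\to E^{p-1,q+1}\otimes \Omega^1_{\PP^1_k}(\log S),\]
on the log pair $(\PP^1_k,S)$, where $n=\dim Y_b$ for a smooth fibre $Y_b$ and each $\theta^{p,q}$ is the graded Kodaira-Spencer morphism.

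Given a deformation $(T,t_0,\psi)$ of $f$ as a family of complete intersections, I would first shrink $T$ to a smooth one-dimensional germ at $t_0$, so that it suffices to show the associated first-order deformation of $f$ vanishes. The argument splits in two parts. \emph{First, infinitesimal Torelli:} the excluded types $(2;n),\,(2,2;n),\,(3;2)$ are precisely the exceptions in the Flenner-Peters infinitesimal Torelli theorem for smooth complete intersections; for every admissible type $T$, the period map from the local deformation space of a smooth complete intersection of type $T$ to its period domain is an immersion at the corresponding point. Hence a non-trivial first-order deformation of $f$ forces a non-trivial first-order deformation of the variation $\mathbb{V}$, equivalently a non-zero class in the Higgs hypercohomology $\mathbb{H}^1(\PP^1_k,\mathrm{End}(E,\theta))$ that preserves the Hodge grading.

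\emph{Second, rigidity of the Higgs data:} I then show $(E,\theta)$ admits no such non-trivial first-order deformation on $(\PP^1_k,S)$. Since $\mathbb{V}$ comes from a Lefschetz pencil, the local monodromy at each $s\in S$ is a Picard-Lefschetz transvection about a vanishing cycle, hence unipotent with a single-block nilpotent residue. Combining Simpson's correspondence with positivity of $\Omega^1_{\PP^1_k}(\log S)=\mathcal{O}(|S|-2)$, the Viehweg-Zuo Arakelov-type inequality applied to the iterated Higgs field $\theta^n\colon E^{n,0}\to E^{0,n}\otimes \Omega^1_{\PP^1_k}(\log S)^{\otimes n}$, and the vanishing $H^1(\PP^1_k,\mathcal{O})=0$, one concludes that every first-order deformation of $(E,\theta)$ compatible with the Hodge filtration and log structure vanishes. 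Putting the two parts together, the given first-order deformation of $f$ is zero, so $f$ is rigid.

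The main obstacle is the second part. Infinitesimal Torelli for complete intersections is classical input, but the Higgs-theoretic rigidity on the base $\PP^1_k$ is delicate: one must package the iterated Kodaira-Spencer map carefully and exploit the fact that, for a Lefschetz pencil of complete intersections of the admissible types, the discriminant locus $S$ is large enough to make $\Omega^1_{\PP^1_k}(\log S)$ positive enough to drive the Viehweg-Zuo positivity/negativity dichotomy to a contradiction. Pinning down the exact numerical input and its interaction with the Picard-Lefschetz monodromy is where the real work lies.
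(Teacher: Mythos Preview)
Your setup and the first step---Flenner's infinitesimal Torelli theorem for complete intersections of the admissible types, reducing rigidity of the family to a statement about the variation of Hodge structure---match the paper exactly. The divergence is in how you close the argument.

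The paper does not try to show that the Higgs bundle is rigid via Arakelov-type inequalities or positivity of $\Omega^1_{\PP^1_k}(\log S)$. Instead it argues as follows: a non-trivial deformation of $f$ produces, via the Kodaira--Spencer map and the injectivity of the infinitesimal period map, a non-zero \emph{flat} section $\sigma$ of the Higgs bundle attached to $\mathrm{End}(R^n_{\prim} f^0_\ast\CC)^{-1,1}$. Viewed as an endomorphism of type $(-1,1)$, $\sigma$ is a morphism of Higgs bundles, so $\ker(\sigma)\oplus\ker(\sigma)^\perp$ is an orthogonal splitting of the underlying VHS. A second application of infinitesimal Torelli shows that $E^{n,0}$ is contained in $\ker(\sigma)$ but not all of the vanishing-cycle part is, so the splitting restricts non-trivially to the sub-VHS $\mathbb V$ generated by the vanishing cycles. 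This contradicts the classical fact that the vanishing-cycle local system of a Lefschetz pencil is absolutely irreducible.

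Your route aims at something harder---vanishing of the relevant Higgs $\mathbb H^1$, i.e.\ rigidity of $(E,\theta)$ itself---and you rightly flag that the numerical input is not pinned down. The Arakelov inequality bounds degrees of Hodge subbundles; it does not by itself kill first-order deformations of a Higgs bundle on $(\PP^1_k,S)$, and there is no general reason such deformations should vanish. You do invoke Picard--Lefschetz, but only for the \emph{local} monodromy (unipotent transvections), whereas the decisive input is the \emph{global} statement: the monodromy representation on the vanishing cohomology is irreducible. That single fact replaces all of the numerical work you were anticipating and makes the proof short.
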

\begin{proof}
 To prove the theorem, we reason by contradiction and assume (without loss of generality) that $k=\CC$. 
Let $B_0\subset \PP^1_k$ be the largest open subscheme such that $f$ is smooth over $B_0$. Write $f^0:Y_0\to B_0$ for the restriction of $f$ to $B_0$.  Suppose that $f$ is non-rigid. Then $f^0$ is non-rigid as a family of smooth complete intersections over $B_0$. 
In particular, the Higgs bundle $(E,\theta)$ associated to the polarized variation of Hodge structures $\mathrm{End}(R^n_{\mathrm{prim}} f^0_{\ast} \mathbb C)^{-1,1}$ on $B_0$ has a  non-zero flat section $\sigma$. (Indeed, by Flenner's infinitesimal Torelli theorem   for smooth complete intersections of type $T$ (see \cite[Theorem 3.1]{Flenner}),    the differential of the period map associated to the variation of Hodge structures $R^n_{\prim} f^0_{\ast}\mathbb C$ is injective, i.e., for all $b$ in $B_0$, the morphism 
\begin{align}\label{e} \mathrm{H}^1(X_b,T_{X_b}) & \longrightarrow \bigoplus_{p+q = n} \Hom( \mathrm{H}^{p,q}_{\mathrm{prim}}(X_b), \mathrm{H}_{\mathrm{prim}}^{p-1,q+1}(X_b)) \end{align}
is everywhere injective. As $f^0$ admits a non-trivial deformation, the Kodaira-Spencer map induces a non-zero flat section of $R^1 f_{\ast}^0 T_{f_0}$. The existence of the non-zero flat section now follows from the injectivity of (\ref{e}) and  \cite[Corollary 12]{PetersSteenbrink}.)

We now use the existence of a non-zero flat section of the Higgs bundle $(E,\theta)$ to prove the theorem. First, note that the non-zero flat section $\sigma$ of $(E,\theta)$ induces a splitting of Higgs bundles \[ (E,\theta)  = \ker(\sigma)\oplus \ker(\sigma)^{\perp},\] where $\ker(\sigma)^\perp$ is the orthogonal complement of $\ker(\sigma)$ with respect to the polarization  on $E$.  Let $\mathbb V$ be the sub-variation of Hodge structures of  $R^n_{\mathrm{prim}} f^0_{\ast} \mathbb C$ corresponding to the vanishing cycles.  Consider the sub-Higgs bundle $$E^\prime = \ker(\sigma)\cap \mathbb V$$ of $\mathbb V,$ and note that $E^{n,0}$ is contained in $E^\prime$, so that $E^\prime$ is non-zero.  Moreover, again by the infinitesimal Torelli theorem for smooth complete intersections of type $T$, the vector bundle $E^{n,0}$ is not in $\ker(\sigma)$. Therefore, the splitting $(E,\theta) = \ker \sigma \oplus \ker(\sigma)^{\perp}$ is non-trivial and induces a non-trivial splitting of $\mathbb V$.  This contradicts the absolute irreducibility of $\mathbb V$ \cite[Corollary 10.23]{PetersSteenbrink}.
\end{proof}

\begin{opm} 
In   \cite{KovacsStrong, Peng, VZd} the rigidity of a Lefschetz pencil with maximal Yukawa coupling is proven. Moreover, in \cite[Example 3 after Theorem 3.8, p. 123]{Peters} Peters proved the rigidity of families of hypersurfaces with surjective Kodaira-Spencer map. Note that Theorem \ref{thm: rigidity of Lefschetz} does not   require any hypothesis on the Yukawa coupling or on the Kodaira-Spencer map. 
\end{opm}

We will say that a type $T= (d_1,\ldots,d_c;n)$ is  of general type if $d_1+\ldots+ d_c \geq n+c+2$.
Note that the adjunction formula implies that a smooth complete intersection of type $T$ is of general type if and only if
$T$ is of general type.

\begin{cor}\label{cor: hypersurfaces defn}
  Let $T$ be a type of general type and let $B_0\subset B=\PP^1_k$ be a dense open.  Then the set of $B$-isomorphism classes of Lefschetz fibrations of complete intersections of type $T$ over $B$ which are smooth over $B_0$ is finite.
\end{cor}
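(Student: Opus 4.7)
The plan is to restrict to the smooth locus $B_0$, combine the boundedness statement of Kov\'acs--Lieblich (Theorem \ref{thm: kovacslieblich}) with the rigidity of Lefschetz fibrations (Theorem \ref{thm: rigidity of Lefschetz}), and then show that the extension from $B_0$ to $B$ is essentially unique. Observe first that a type $T=(d_1,\ldots,d_c;n)$ of general type satisfies $d_1+\ldots+d_c\geq n+c+2$, and hence is never one of the exceptional types $(2;n),(2,2;n),(3;2)$ ruled out in Theorem \ref{thm: rigidity of Lefschetz}; so that rigidity theorem applies to every Lefschetz fibration of type $T$ appearing in the statement.

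Given a Lefschetz fibration $f:Y\to B$ of complete intersections of type $T$ smooth over $B_0$, I would restrict to $f^0:Y^0\to B_0$, a smooth family of canonically polarized varieties (since $T$ is of general type) with Hilbert polynomial $h$ depending only on $T$. Next I would argue that $f^0$ is non-isotrivial: an isotrivial family has monodromy factoring through the finite group $\Aut(F)$ of a fiber $F$, whereas the Picard--Lefschetz transformations of a Lefschetz pencil of type $T$ generate an infinite subgroup of $\GL(H^n_{\prim}(F))$ by Deligne's classical monodromy theorem for Lefschetz pencils. Then Theorem \ref{thm: kovacslieblich} places $f^0$ in one of finitely many deformation types of non-isotrivial families of canonically polarized varieties over $B_0$ with Hilbert polynomial $h$. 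Moreover, the proof of Theorem \ref{thm: rigidity of Lefschetz} actually shows that $f^0$ has no non-trivial infinitesimal deformation as a smooth family (the Higgs bundle $\End(R^n_{\prim}f^0_\ast\CC)^{-1,1}$ has no non-zero flat section), so $f^0$ is rigid; any family deformation-equivalent to $f^0$ is isomorphic to $f^0$, and each deformation type containing such an $f^0$ therefore collapses to a single isomorphism class. Hence there are finitely many isomorphism classes of restrictions $f^0$ arising from Lefschetz fibrations of type $T$ smooth over $B_0$.

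It remains to show that $f:Y\to B$ is determined up to $B$-isomorphism by $f^0$. Since the fibers are canonically polarized of general type and the singular fibers of a Lefschetz fibration are nodal (hence Gorenstein with ample dualizing sheaf), $f$ can be recovered as a relatively stable canonical compactification of $f^0$ over $B$; the type and location of the nodal fibers are forced by the local Picard--Lefschetz monodromy of $f^0$ at the punctures of $B_0$, and the uniqueness of such a compactification then finishes the proof. The principal technical obstacles I expect are justifying the non-isotriviality of $f^0$ cleanly in all cases, reconciling the two notions of deformation at play (canonically polarized families in Kov\'acs--Lieblich versus families of complete intersections in the rigidity theorem), and making rigorous the uniqueness of the extension from $B_0$ to $B$.
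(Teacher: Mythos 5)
Your proposal follows essentially the same route as the paper: restrict to $B_0$, apply the Kov\'acs--Lieblich boundedness theorem (Theorem \ref{thm: kovacslieblich}) to get finitely many deformation types, and invoke the rigidity theorem (Theorem \ref{thm: rigidity of Lefschetz}) to collapse each type to a single isomorphism class. The paper's proof is in fact terser than yours and does not spell out the three points you correctly flag as loose ends --- non-isotriviality of $f^0$, the mismatch between deformations as canonically polarized families (Kov\'acs--Lieblich) versus as families of complete intersections (the rigidity theorem), and the uniqueness of the extension from $B_0$ to $B$.
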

\begin{proof}
As $T$ is of general type, any smooth complete intersection of type $T$ is  canonically polarized.  Thus,  the corollary follows from the boundedness (Theorem \ref{thm: kovacslieblich}) of smooth families of canonically polarized varieties over $B_0$, and rigidity  (Theorem \ref{thm: rigidity of Lefschetz}) of Lefschetz fibrations over $B$ of complete intersections of type $T$.
\end{proof}

\begin{cor}\label{cor: lefschetz pencils} Let $T$ be a type and let $B_0\subset \PP^1_k$ be a dense open. 
The set of  varieties of general type $X$ such that $X$ admits a Lefschetz pencil $X\dashrightarrow \PP^1_k$ of type $T$ which is defined and smooth over $B_0$ is finite.
\end{cor}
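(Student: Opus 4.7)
The plan is to reduce Corollary \ref{cor: lefschetz pencils} to Corollary \ref{cor: hypersurfaces defn} together with Theorem \ref{thm: mmp}.

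First, I reduce to the case that $T$ is of general type. Suppose $X$ is a variety of general type admitting a Lefschetz pencil of type $T$ smooth over $B_0$. The blow-up $Y \to X$ along the axis yields a Lefschetz fibration $Y \to \PP^1_k$; as blowing up a smooth center preserves Kodaira dimension, $Y$ is also of general type. The generic fiber of $Y \to \PP^1_k$ is a smooth complete intersection of type $T$, so by Iitaka's subadditivity conjecture $C_{n,1}$ (a theorem of Kawamata and Viehweg for fibrations over curves), this generic fiber is also of general type. By the adjunction formula (as recalled immediately before Corollary \ref{cor: hypersurfaces defn}), this forces $T$ to be of general type. If $T$ is not of general type, the set in question is empty.

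Now assume $T$ is of general type. Assigning to each $X$ the Lefschetz fibration $Y_X \to \PP^1_k$ obtained by blowing up the axis, I apply Corollary \ref{cor: hypersurfaces defn} to see that only finitely many isomorphism classes of such $Y_X$ arise. It therefore suffices to show that, for each Lefschetz fibration $Y \to \PP^1_k$ in this finite set, only finitely many $X$ yield $Y_X \cong Y$.

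For each such $X$ there is, by construction, a birational morphism $Y \to X$. The Lefschetz pencil on $X$ arises from a two-dimensional linear subsystem of a very ample line bundle $L$ on $X$. Adjunction, applied to a fiber $F$ of the pencil (a smooth complete intersection of type $T$ in $\PP^{n+c}$), gives $L^{\dim X}=d_1\cdots d_c$ and $L^{\dim X -1}\cdot K_X=(d_1+\cdots+d_c-n-c-2)\,d_1\cdots d_c$; both depend only on $T$. By Matsusaka's big theorem (as used in the proof of Lemma \ref{lem: mmp}), some multiple $NL$, with $N$ depending only on $T$, is very ample, giving a closed immersion of $X$ into some $\PP^{N_0}_k$ of degree bounded in terms of $T$. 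Theorem \ref{thm: mmp}, applied to the fixed $Y$ and these uniformly bounded closed immersions of $X$, then yields only finitely many isomorphism classes of $X$. The main obstacle is this last step: one must verify that $L^{\dim X}$ and $L^{\dim X -1}\cdot K_X$ are indeed enough for Matsusaka's big theorem to apply with constant $N=N(T)$, and that the dimension $N_0$ of the ambient projective space is likewise controlled by $T$ alone, possibly using information from the fixed $Y$ to bound the remaining numerical invariants of $X$.
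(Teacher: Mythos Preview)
Your overall architecture is exactly that of the paper: pass from $X$ to the blown-up Lefschetz fibration $Y\to\PP^1_k$, invoke Corollary~\ref{cor: hypersurfaces defn} to get finitely many $Y$, and then use Theorem~\ref{thm: mmp} to get finitely many $X$ under each $Y$. Your opening reduction to $T$ of general type is a detail the paper leaves implicit; note that the \emph{easy addition} inequality $\kappa(Y)\le\kappa(F)+\dim\PP^1_k$ already suffices here, so you do not need to appeal to Kawamata's $C_{n,1}$.

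Where you diverge from the paper, and create the ``main obstacle'' you worry about, is in the final step. You try to manufacture a bounded projective embedding of $X$ via Matsusaka's big theorem from the numbers $L^{\dim X}$ and $L^{\dim X-1}\cdot K_X$. This is unnecessary. By definition, a Lefschetz pencil arises from an embedding $X\hookrightarrow\PP^N$ together with a pencil of hyperplanes; the fibre $X\cap H$ sits in $H\cong\PP^{N-1}$, and for it to be a complete intersection of type $T=(d_1,\ldots,d_c;n)$ in the sense of the paper one must have $N-1=n+c$. Thus the Lefschetz pencil itself already furnishes a closed immersion $X\hookrightarrow\PP^{n+c+1}$, and its degree there equals $\deg(X\cap H)=d_1\cdots d_c$. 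Both the ambient dimension and the degree are determined by $T$ alone, so Theorem~\ref{thm: mmp} applies directly with $n=n_T+c+1$ and $d=d_1\cdots d_c$. Once you see this, the uncertainty you flag about controlling $N_0$ and the applicability of Matsusaka disappears; your detour through Matsusaka can simply be deleted.
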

\begin{proof} This follows from Corollary \ref{cor: hypersurfaces defn} and  Theorem \ref{thm: mmp}.
\end{proof}

\section{Varieties defined over $\Qbar$}
A morphism of $\CC$-varieties $f:X\to Y$ \textit{can be defined over $\Qbar$} if and only if there is a morphism $f_0:X_0\to Y_0$ of $\Qbar$-varieties, an isomorphism $X\isomto X_0\otimes_{\Qbar} \CC$ and an isomorphism $Y\isomto Y_0\otimes_{\Qbar} \CC$ such that the following diagram \[\xymatrix{ X \ar[d]_f \ar[r]^-{\sim} &  X_0\otimes_{\Qbar}\CC \ar[d]^{f_0\otimes_{\Qbar} \CC} \\  Y \ar[r]^-{\sim}& Y_0\otimes_{\Qbar}\CC } \] is commutative. A variety $X$  \textit{can be defined over $\Qbar$} if there exists a variety $X_0$ over $\Qbar$ and an isomorphism $X \isomto X_0\otimes_{\Qbar} \CC$. (Note that $X$ can be defined over $\Qbar$ if and only if the identity morphism $\mathrm{id}_X:X\to X$ can be defined over $\Qbar$.)

The aim of this section is to state a criterion for a morphism of projective varieties over $\CC$ to be defined over $\Qbar$. We will follow \cite{ Gon2, Gon3, Gon1}. Let us recall how $\mathrm{Aut}(\CC)$ acts on the set of isomorphism classes of projective varieties over $\CC$.

For $\sigma$ in $\mathrm{Aut}(\CC)$ and $f$ in $\CC[x_0,\ldots,x_n]$, let $f^\sigma$ be the polynomial obtained by acting with $\sigma$ on its coefficients. This defines an action \[ \mathrm{Aut}(\CC) \times \CC[x_0,\ldots, x_n] \to \CC[x_0,\ldots,x_n], \quad (\sigma, f) \to f^\sigma\] of $\mathrm{Aut}(\CC)$ on the ring $\CC[x_0,\ldots,x_n]$. If $I$ is an ideal of $\CC[x_0,\ldots,x_n]$ and $\sigma$ is an automorphism of $\CC$, we define the ideal  $I^\sigma$ to be the image of $I$ under $\sigma$. In other words, $I^\sigma$ consists of the elements $f^\sigma$, where $f\in I$. This induces an action of $\mathrm{Aut}(\CC)$ acts on the set of isomorphism classes of projective varieties over $\CC$.   For $X$ a  projective variety, we let $X^\sigma$ denote (a representative of the isomorphism class of) its conjugate under the action of $\sigma \in \mathrm{Aut}(\CC)$. 

\begin{lem}\label{lem: varieties over nf} \cite[p. 60-61]{Gon1} Let $X$ be a projective variety over $\CC$. Then $X$ can be defined over $\Qbar$ if and only if the set of isomorphism classes of conjugates $X^\sigma$ of $X$ (as $\sigma$ runs over all elements of $\mathrm{Aut}(\CC)$) is finite. \qed
\end{lem}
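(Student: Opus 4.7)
My plan is to prove the two implications separately; the forward direction is formal, and the reverse direction is the substantive one.

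For the forward direction, assume $X\cong X_0\otimes_\Qbar \CC$ for some projective variety $X_0$ over $\Qbar$. Embed $X_0 \hookrightarrow \PP^n_\Qbar$ by finitely many homogeneous equations, and let $K \subset \Qbar$ be the number field generated by their coefficients. Any $\sigma \in \Aut(\CC)$ preserves algebraicity over $\QQ$, hence $\sigma|_\Qbar \in \Gal(\Qbar/\QQ)$, and one checks directly that $X^\sigma \cong X_0^{\sigma|_\Qbar}\otimes_\Qbar \CC$. Since $X_0^{\sigma|_\Qbar}$ depends only on $\sigma|_K$, there are at most $[K:\QQ]$ isomorphism classes of conjugates.

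For the converse, the strategy is to encode the isomorphism class of $X$ as a $\CC$-point of some scheme defined over $\QQ$ and then invoke the principle that a $\CC$-point of a $\QQ$-scheme with finite $\Aut(\CC/\QQ)$-orbit must be defined over $\Qbar$ (since any single transcendental element of $\CC$ already has infinite $\Aut(\CC/\QQ)$-orbit, the coordinates of any such point must all be algebraic over $\QQ$). To realize the encoding I would fix an embedding $X \hookrightarrow \PP^N_\CC$, let $h$ be the Hilbert polynomial, and use the Hilbert scheme $H = \mathrm{Hilb}^h_{\PP^N_\QQ}$ together with its $\PGL_{N+1,\QQ}$-action; the subscheme $X$ defines $[X] \in H(\CC)$ satisfying $[X^\sigma] = \sigma \cdot [X]$, and $\PGL$ identifies different embeddings of the same abstract subscheme. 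The hypothesis translates, after choosing embeddings carefully, into the statement that $\{[X^\sigma] : \sigma \in \Aut(\CC)\}$ is contained in a finite union $U \subset H(\CC)$ of $\PGL(\CC)$-orbits.

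The Zariski closure $\overline{U} \subset H_\CC$ is then $\Aut(\CC/\QQ)$-invariant and decomposes into finitely many $\PGL$-stable irreducible components, which are permuted by $\Aut(\CC/\QQ)$. The component $Z$ containing $[X]$ therefore has finite-index stabilizer in $\Aut(\CC/\QQ)$, so the disjoint-union argument identifies the $\Aut(\CC/\QQ)$-orbit of $Z$ with a closed subscheme defined over $\QQ$, from which $Z$ itself descends to a subvariety of $H_\Qbar$. Being a non-empty variety over the algebraically closed field $\Qbar$, this $Z$ contains a closed point lying in its dense $\PGL$-orbit, yielding $[X_0] \in H(\Qbar)$ which is $\PGL(\CC)$-equivalent to $[X]$; in particular $X_0 \otimes_\Qbar \CC \cong X$ as abstract varieties.

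The main technical obstacle is the translation from ``finitely many abstract iso classes of $\{X^\sigma\}$'' to ``$\{[X^\sigma]\}$ lies in finitely many $\PGL$-orbits on $H$'': for a fixed abstract variety $Y$ the embeddings $Y \hookrightarrow \PP^N$ of Hilbert polynomial $h$ can a priori split into infinitely many $\PGL$-orbits (one per isomorphism class of very ample line bundle realizing the embedding). One can circumvent this by a canonical choice of very ample line bundle (for instance, a suitable pluricanonical one when $X$ is canonically polarized, which is the relevant case for the application in this paper), or more robustly by replacing $H$ with the quotient stack $[H/\PGL]$ and applying the finite-orbit principle directly to the $\CC$-point of this stack represented by the isomorphism class of $X$.
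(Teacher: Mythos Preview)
The paper does not prove this lemma at all: it is stated with a citation to Gonz\'alez-Diez and a \texttt{\textbackslash qed}, so there is nothing to compare against. Your forward direction is correct and standard.

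For the converse, your overall strategy---spread $X$ out to a point of a $\QQ$-scheme and apply the principle that a $\CC$-point of a $\QQ$-scheme with finite $\Aut(\CC)$-orbit is algebraic---is the right one, and you are right to flag the passage from ``finitely many abstract isomorphism classes of $X^\sigma$'' to ``finitely many $\PGL$-orbits on $H$'' as the crux. However, your second proposed fix does not close this gap: a $\CC$-point of the quotient stack $[H/\PGL]$ is a pair $(Y,L)$ with $L$ very ample of the given Hilbert polynomial, not a bare isomorphism class of $Y$. Two conjugates $X^\sigma \cong X^\tau$ may land in different points of $[H/\PGL](\CC)$ if the conjugated hyperplane bundles are not carried to one another by the abstract isomorphism (this genuinely occurs, e.g.\ when $\Pic^0$ is positive-dimensional). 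So the finite-orbit principle on $[H/\PGL]$ is not fed by the hypothesis. Your first fix (pluricanonical re-embedding) does work when $X$ is canonically polarized, which covers the paper's applications, but not the lemma as stated.

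A clean way to handle the general case avoids $\PGL$-orbits entirely. Let $W\subset H$ be the Zariski closure over $\QQ$ of the image of $[X]$; then $W$ is an integral $\QQ$-variety and every point of the $\Aut(\CC)$-orbit of $[X]$ maps to the generic point of $W$. The stabilizer $G=\{\sigma:X^\sigma\cong X\}$ has finite index, and the same coset argument you give shows $G\cdot[X]$ still consists of generic $\CC$-points of $W$. Now form $I=\Isom_{W_\CC}(\mathcal X_{W_\CC},\,X\times W_\CC)\to W_\CC$; this is only locally of finite type, so its image $S=\{w:\mathcal X_w\cong X\}$ is a countable union of constructible sets. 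But $S\supset G\cdot[X]$ consists of generic points, and a generic $\CC$-point lies in no proper closed subset of $W_\CC$; hence at least one constructible piece of $S$ is dense and so contains a non-empty open $U\subset W_\CC$. Since $W$ is defined over $\QQ$, the set $W(\Qbar)$ is Zariski-dense in $W_\CC$, so $U$ contains a $\Qbar$-point $q$, and $\mathcal X_q$ is the desired $\Qbar$-model of $X$.
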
 

For a morphism $f:X\to Y$ of $\CC$-varieties and $\sigma$ in $\mathrm{Aut}(\CC)$, we let $f^\sigma$ denote (a representative of the isomorphism class of) its conjugate.

\begin{lem}\label{lem: morphisms over nf} \cite[p. 60-61]{Gon1} Let $f:X\to Y$ be a morphism of projective varieties over $\CC$. Then $f$ can be defined over $\Qbar$ if and only if the set of isomorphism classes of conjugates $f^\sigma$ of $f$ is finite. \qed
\end{lem}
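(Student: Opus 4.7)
For the forward direction, suppose $f$ admits a model $f_0\colon X_0 \to Y_0$ over $\Qbar$. Since $f_0$, $X_0$ and $Y_0$ are of finite presentation, a standard spreading-out argument produces a number field $K \subset \Qbar$ and a model $f_K\colon X_K \to Y_K$ over $K$ with $f_K \otimes_K \Qbar \cong f_0$. For $\sigma \in \mathrm{Aut}(\CC)$, the conjugate $f^\sigma$ is then isomorphic to $f_K^{\sigma|_K} \otimes_K \CC$, and so depends only on the restriction $\sigma|_K \in \Hom_{\QQ}(K,\Qbar)$. Since the latter set has cardinality $[K\colon \QQ] < \infty$, the set of isomorphism classes of conjugates of $f$ is finite.

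For the converse, assume $\{f^\sigma\}$ has finitely many isomorphism classes. The assignments $f^\sigma \mapsto X^\sigma$ and $f^\sigma \mapsto Y^\sigma$ descend to the isomorphism-class level, so the sets $\{X^\sigma\}$ and $\{Y^\sigma\}$ are also finite up to isomorphism. By Lemma \ref{lem: varieties over nf}, $X$ and $Y$ admit models $X_0, Y_0$ over $\Qbar$, which after further descent I take to be defined over some number field $K \subset \Qbar$, say $X_K$ and $Y_K$. I fix isomorphisms $X \cong X_K \otimes_K \CC$ and $Y \cong Y_K \otimes_K \CC$ once and for all.

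Under these identifications $f$ becomes a $\CC$-point $[f] \in \mathcal H(\CC)$ of the Hom scheme $\mathcal H := \underline{\Hom}_K(X_K, Y_K)$, which is locally of finite type over $K$ because $X_K$ is projective and $Y_K$ is quasi-projective. The converse then reduces to the following observation, which I would establish as a separate lemma: a $\CC$-point $\phi$ of a locally of finite type $K$-scheme $\mathcal H$ is defined over $\Qbar$ if and only if its $\mathrm{Aut}(\CC/\Qbar)$-orbit in $\mathcal H(\CC)$ is finite. Indeed, this orbit is in bijection with the set of $K$-algebra embeddings of the residue field $\kappa(p) \hookrightarrow \CC$ at the underlying scheme-theoretic point $p \in \mathcal H$, and such embeddings form a finite set if and only if $\kappa(p)/K$ is algebraic, i.e., $\kappa(p) \subset \Qbar$.

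The main obstacle I anticipate is bridging the hypothesis, which provides finitely many \emph{isomorphism classes} of $\{f^\sigma\}$, and the stronger finite-\emph{orbit} condition required by the key observation above. The two differ by the action of the automorphism group scheme $G := \underline{\Aut}_K(X_K) \times_K \underline{\Aut}_K(Y_K)$, whose $G(\CC)$-orbits on $\mathcal H(\CC)$ are exactly the isomorphism classes of morphisms $X \to Y$. I would handle this by a rigidification: replace $\mathcal H$ by a suitable cover on which $G$ acts freely (for instance, by marking finitely many points of $X_K$ and $Y_K$ whose joint stabilizer in $G$ is trivial), apply the key observation to this rigidified moduli, and then descend. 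The upshot is that the isomorphism class of $f$ is defined over $\Qbar$, which by the definition recalled at the start of the section is exactly what it means for $f$ to be defined over $\Qbar$.
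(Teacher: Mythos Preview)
The paper does not give its own proof of this lemma; it simply cites Gonz\'alez-Diez and ends with \qed. So there is nothing in the paper to compare your argument against, and your proposal should be judged on its own merits.

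Your forward direction is correct, and for the converse your overall architecture (descend $X$ and $Y$ via Lemma \ref{lem: varieties over nf}, pass to the Hom scheme $\mathcal H$ over a number field, then use the fact that a $\CC$-point of a locally finite type $\Qbar$-scheme with finite $\Aut(\CC/\Qbar)$-orbit is a $\Qbar$-point) is the right one. You also correctly isolate the genuine obstacle: the hypothesis controls isomorphism classes, i.e.\ $G(\CC)$-orbits in $\mathcal H(\CC)$, not the literal $\Aut(\CC/\Qbar)$-orbit of $f$.

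The gap is that your proposed rigidification does not bridge this. If you mark $\Qbar$-points on $X_K$ and $Y_K$ to kill $G$, then for $\sigma\in\Aut(\CC/\Qbar)$ the markings are fixed, and two rigidified conjugates $(f^\sigma,\text{marks})$ and $(f^\tau,\text{marks})$ are isomorphic only when $f^\sigma=f^\tau$ in $\mathcal H(\CC)$. So finiteness on the rigidified side is equivalent to finiteness of the set $\{f^\sigma\}$ itself, which is strictly stronger than what you are given (finitely many $G(\CC)$-orbits). The cover $\mathcal H\times R$ with $G$ acting freely has the same defect: the quotient parametrizes pairs $(f,r)$ up to $G$, and with $r\in R(\Qbar)$ fixed this again collapses to the literal orbit of $f$.

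What actually closes the gap is to work with the orbit rather than the point. The conjugate orbits $G(\CC)\cdot f^\sigma$ all have the same dimension, since $\mathrm{Stab}(f^\sigma)=\mathrm{Stab}(f)^\sigma$. Hence the finitely many orbit closures $\overline{G\cdot f^\sigma}$ have the same dimension, none is strictly contained in another, and their union is an $\Aut(\CC/\Qbar)$-stable closed subset of $\mathcal H_\CC$, hence defined over $\Qbar$. Each orbit closure is therefore an irreducible component of a $\Qbar$-scheme (use $G^0$ to get irreducibility), so is itself defined over $\Qbar$; its $\Qbar$-points are then Zariski dense and must meet the open orbit $G\cdot f$. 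Any such $\Qbar$-point $g\in\mathcal H(\Qbar)$ lies in the $G(\CC)$-orbit of $f$, and unwinding this gives the required model of $f$ over $\Qbar$. Alternatively, one may invoke Rosenlicht's theorem to produce a geometric quotient of a $G$-stable open of $\mathcal H$ containing $f$ and apply your key observation there.
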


We now combine our analogue (Theorem \ref{thm: mmp}) of Tsai's finiteness result (see Remark \ref{remark}) with the criteria for a variety to be defined over $\Qbar$, and show that if $X$ is a variety of general type over $\CC$ that is dominated by a variety which can be defined over $\Qbar$, then $X$ can be defined over $\Qbar$.

\begin{lem}\label{lem: dominant}
Let $X$ be a projective variety of general type over $\CC$. Let $f:Y\to X$ be a birational morphism, where $Y$ is a projective variety that can be defined over $\Qbar$. Then  $X$ and $f$ can be defined over $\Qbar$.
\end{lem}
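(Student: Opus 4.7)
The plan is to apply the criteria of Lemmas \ref{lem: varieties over nf} and \ref{lem: morphisms over nf}: I will show that both $\{X^\sigma\}$ and $\{f^\sigma\}$, as $\sigma$ ranges over $\mathrm{Aut}(\CC)$, contain only finitely many isomorphism classes. The finiteness of $\{X^\sigma\}$ will come from combining Theorem \ref{thm: mmp} with the hypothesis that $Y$ is defined over $\Qbar$.

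First, by Hironaka I would replace $Y$ by a desingularization $\tilde Y$ defined over $\Qbar$; the composition $\tilde Y \to Y \to X$ is still a birational morphism, so I may assume $Y$ is smooth projective and defined over $\Qbar$. Next, I fix a very ample line bundle $H$ on $X$ giving a closed immersion $X \hookrightarrow \PP^n_{\CC}$ of degree $d := H^{\dim X}$. For each $\sigma$, the conjugate closed immersion $X^\sigma \hookrightarrow \PP^n_\CC$ has the same degree $d$ (intersection numbers are invariant under $\sigma$), and $f^\sigma : Y^\sigma \to X^\sigma$ is again a birational morphism.

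By Lemma \ref{lem: varieties over nf} applied to $Y$, there are only finitely many isomorphism classes $Y_1, \ldots, Y_r$ among the conjugates $Y^\sigma$. Applying Theorem \ref{thm: mmp} once to each $Y_i$ (with $n$, $D = \mathcal{O}_{\PP^n}(1)$, and degree $d$ all fixed) shows that the set of isomorphism classes of smooth projective varieties of general type $X'$ admitting both a birational morphism $Y_i \to X'$ and a closed immersion of degree $d$ into $\PP^n_\CC$ is finite. Every $X^\sigma$ belongs to one of these finite sets (after choosing an isomorphism $Y^\sigma \cong Y_i$), so $\{X^\sigma\}$ is finite and Lemma \ref{lem: varieties over nf} yields that $X$ is defined over $\Qbar$.

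For the morphism $f$, Lemma \ref{lem: morphisms over nf} reduces the question to showing that $\{f^\sigma\}$ is finite. Having established that $\{Y^\sigma\}$ and $\{X^\sigma\}$ are finite, it suffices to verify that for each fixed pair $(Y_i, X_j)$ there are only finitely many birational morphisms $Y_i \to X_j$. But any two such morphisms differ, as rational maps, by a birational self-map of $X_j$, and $\mathrm{Bir}(X_j)$ is finite because $X_j$ is of general type (Matsumura). The main obstacle I anticipate is the smoothness hypothesis on $X$ in Theorem \ref{thm: mmp}: if $X$ is not already smooth, I would first prove the statement for a smooth birational model $\tilde X \to X$ (to which $Y$ still maps birationally after resolving the indeterminacy) and then deduce it for $X$ itself by the same conjugacy-counting argument applied to the birational morphism $\tilde X \to X$.
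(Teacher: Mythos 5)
Your proof is correct and follows the same overall strategy as the paper's: reduce to showing that the conjugate sets $\{Y^\sigma\}$ and $\{X^\sigma\}$ and the morphism-conjugates $\{f^\sigma\}$ have finitely many isomorphism classes, obtain finiteness of $\{Y^\sigma\}$ from Lemma \ref{lem: varieties over nf}, observe that every $X^\sigma$ embeds in $\PP^n_\CC$ with the same degree $d$, and invoke Theorem \ref{thm: mmp}. The one place where you diverge is the finiteness of morphisms from a fixed source to a fixed target: you fix one birational morphism $g_0:Y_i\to X_j$, observe that $g\mapsto g\circ g_0^{-1}$ injects the set of birational morphisms $Y_i\to X_j$ into $\mathrm{Bir}(X_j)$, and appeal to Matsumura's theorem that the birational automorphism group of a variety of general type is finite. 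The paper instead cites the Kobayashi--Ochiai finiteness theorem for surjective morphisms onto a fixed variety of general type. Both are adequate; Matsumura's statement is older and somewhat more elementary, and your argument is exactly tailored to the birational case, so this is a perfectly legitimate substitution.

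Your flagged concern about a possibly singular $X$ is real: Theorem \ref{thm: mmp} as stated takes a smooth target (its proof applies Mori's cone theorem to $K_X$), whereas Lemma \ref{lem: dominant} allows $X$ singular, and the paper's own proof silently passes to the smooth case in its final reduction without comment. Since the lemma is only applied to smooth complete intersections in Theorem \ref{thm: main theorem}, this does not affect the paper's main result, but your instinct to flag it is sound. Note, however, that the repair you sketch is circular: applying ``the same conjugacy-counting argument'' to $\tilde X\to X$ would again require a version of Theorem \ref{thm: mmp} allowing a singular target, which is precisely what is missing. A genuine fix would require an additional argument (for instance via canonical models or a Hilbert-scheme boundedness argument) rather than a direct re-run of the same counting.
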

\begin{proof} (We follow the proof of \cite[Proposition 3.2]{Gon3}.) By resolution of singularities in characteristic zero, there exists a surjective morphism $f:Y^\prime\to X$, where $Y^\prime$ is a smooth projective variety that can be defined over $\Qbar$. (Indeed, let $Y^\prime\to Y$ be a resolution of singularities with a model over $\Qbar$.) To prove the lemma, it suffices to show that the set of (isomorphism classes of) conjugates $f^\sigma$ (with $\sigma \in \mathrm{Aut}(\CC)$) is finite. To do so, note that by Lemma \ref{lem: varieties over nf}, the set of isomorphism classes of conjugates $Y^\sigma$ is finite. Moreover, by the Kobayashi-Ochiai finiteness theorem \cite{KO}, for all $\sigma\in \mathrm{Aut}(\CC)$, the set of surjective morphisms from $Y^\sigma$ to  a fixed variety of general type is finite. Also, since $X$ is projective, there exists a closed embedding $X\subset \PP^n_\CC$ of some degree $d$. Note that any conjugate $X^\sigma$ of $X$ admits a closed embedding $X^\sigma \subset \PP^n_\CC$ of the same degree $d$.

Therefore, to conclude the proof, it suffices to show that the set of $\CC$-isomorphism classes of smooth projective varieties of general type $X$ over $\CC$ such that there exists a birational morphism $Y\to X^\prime$ and a closed immersion $X^\prime\subset \PP^n_\CC$ of degree $d$ is finite. The latter clearly follows from Theorem \ref{thm: mmp} .
\end{proof}

\begin{opm}
The converse to Lemma \ref{lem: dominant} is false. For $n\geq 2$, the (total space of the) blow-up of $\PP^n_{\CC}$ in a set of points $B\subset \PP^n(\CC)$ of cardinality $>n+2$ with ``transcendental'' coordinates can not be defined over $\Qbar$.
\end{opm}

\begin{lem}\label{cor: lefschetz fibn over qbar} Let $Y$ be a smooth complete intersection of general type over $\CC$ and let $f:Y\to \PP^1_{\CC}$ be a Lefschetz fibration whose critical points lie in $\PP^1(\Qbar)$. Then the variety $Y$ and the morphism $f$ can be defined over $\Qbar$.
\end{lem}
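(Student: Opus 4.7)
The plan is to apply Lemma~\ref{lem: morphisms over nf}: it suffices to show that the set of isomorphism classes (in the arrow category) of the conjugates $\{f^\sigma \mid \sigma \in \mathrm{Aut}(\CC)\}$ is finite, for then both $f$ and its source $Y$ descend to $\Qbar$. Let $S \subset \PP^1_\CC$ denote the finite set of critical values of $f$; by hypothesis $S \subset \PP^1(\Qbar)$. For any $\sigma \in \mathrm{Aut}(\CC)$ the critical values of $f^\sigma$ form the set $\sigma(S) \subset \PP^1(\Qbar)$. Since $\mathrm{Aut}(\CC)$ acts on $\Qbar$ through the profinite group $\mathrm{Gal}(\Qbar/\QQ)$, each point of $S$ has a finite orbit, so the family $\{\sigma(S) : \sigma \in \mathrm{Aut}(\CC)\}$ consists of only finitely many finite subsets $S_1,\ldots,S_r$ of $\PP^1(\Qbar)$; set $B_{0,i} := \PP^1_\CC \setminus S_i$.

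Next I verify that the fibre type is admissible for Corollary~\ref{cor: hypersurfaces defn}. Let $(d_1,\ldots,d_c;n)$ be the type of the complete intersection underlying $Y$. A smooth fibre of the associated Lefschetz pencil is a hyperplane section, hence a complete intersection of type $T' := (d_1,\ldots,d_c;n-1)$. The general-type condition $d_1 + \cdots + d_c \geq n + c + 2$ gives $d_1+\cdots+d_c \geq (n-1)+c+3$, so $T'$ is again of general type; the same inequality rules out each of the three exceptional types $(2;m)$, $(2,2;m)$, $(3;2)$ of Theorem~\ref{thm: rigidity of Lefschetz}. For instance, $T' = (3;2)$ would force $(c,d_1,n) = (1,3,3)$, contradicting $3 \geq 6$; the other two cases are similarly immediate.

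Conjugation preserves the type, so each $f^\sigma$ is a Lefschetz fibration of complete intersections of type $T'$ over $\PP^1_\CC$, smooth over $B_{0,i(\sigma)}$ for some index $i(\sigma) \in \{1,\ldots,r\}$. Applying Corollary~\ref{cor: hypersurfaces defn} to each of the finitely many bases $B_{0,i}$ yields finitely many $\PP^1_\CC$-isomorphism classes of such Lefschetz fibrations for every $i$; taking the union over $i$ shows that the family $\{f^\sigma\}_{\sigma}$ splits into finitely many $\PP^1_\CC$-isomorphism classes, hence \emph{a fortiori} into finitely many arrow-isomorphism classes. Lemma~\ref{lem: morphisms over nf} then concludes the proof.

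The substantive work has already been absorbed into Corollary~\ref{cor: hypersurfaces defn}, which itself rests on the rigidity Theorem~\ref{thm: rigidity of Lefschetz}, the Kov\'acs-Lieblich boundedness Theorem~\ref{thm: kovacslieblich}, and the MMP-based Theorem~\ref{thm: mmp}. The only point requiring genuine care in the present argument is the numerical verification that $T'$ avoids the three exceptional types of Theorem~\ref{thm: rigidity of Lefschetz}; this is the main obstacle in disguise, as it is precisely the configuration of those exceptional types in the rigidity theorem that allows the reduction to work uniformly for every complete intersection of general type.
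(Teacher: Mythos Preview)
Your proof is correct and follows the same approach as the paper: reduce via Lemma~\ref{lem: morphisms over nf} to finiteness of the conjugates $f^\sigma$, use that the $\Qbar$-rational critical locus has only finitely many Galois conjugates, observe that the fibres are complete intersections of a fixed general type, and invoke the finiteness of Lefschetz fibrations of that type over each of the finitely many possible smooth loci. The only differences are cosmetic---you cite Corollary~\ref{cor: hypersurfaces defn} (arguably the more direct reference) where the paper cites Corollary~\ref{cor: lefschetz pencils}, and you spell out the numerical check that the fibre type $T'$ is of general type and avoids the exceptional types, which the paper simply asserts.
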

\begin{proof} By Lemma \ref{lem: varieties over nf}, to prove the corollary, it suffices to show that the set of conjugates $f^\sigma$ of $f$ under the action of $\mathrm{Aut}(\CC)$ is finite.  Let $B\subset \PP^1(\Qbar)$ be the set of critical points of the Lefschetz fibration $f:Y\to \PP^1_{\CC}$. Note that, as $Y$ is a complete intersection, the fibres of $f$ are complete intersections of some type, say $T$. Note that $T$ is of general type. Since the set of conjugates $B^\sigma$ of $B$ is finite and $f^\sigma$ is a Lefschetz fibration of complete intersections of (the same) type $T$, it suffices to show that the set of Lefschetz fibrations of fixed type which are smooth over a fixed open in $\PP^1_\CC$ is finite. The latter clearly follows from Corollary \ref{cor: lefschetz pencils}.
\end{proof}

\begin{opm}
Let $X$ be a   complete intersection over $\CC$. It follows from \cite[Proposition 2.1.11]{BenoistThesis} that $X$ can be defined over $\Qbar$ as a variety if and only if $X$ can be defined over $\Qbar$ as a complete intersection.
\end{opm}

\section{Lefschetz functions: proof of Theorem \ref{thm: intro}}
Let $X$ be a smooth projective variety over $\CC$. If $X$ can be defined over $\Qbar$, there exists a rational function $X\dashrightarrow \PP^1_\CC$ on $X$ whose critical points lie in $\mathbb P^1_{\Qbar}(\Qbar)$. The converse to this statement is false. Indeed, if $C$ and $D$ are smooth projective curves of genus at least two such that $C$ can be defined over $\Qbar$ and $D$ can't be defined over $\Qbar$, then $X = C\times D^{n-1}$ is an $n$-dimensional variety of general type that can't be defined over $\Qbar$. Nevertheless, there is a rational function on $X$ with at most three critical points. Explicitly: let $\pi:C\to \PP^1_{\CC}$ be a Belyi morphism and $X\to C\to \PP^1_{\CC}$ the projection followed by $\pi$. This explains why we restrict to a special class of rational functions in Theorem \ref{thm: intro}.

\begin{defn} \label{defn: lefschetz}
A rational function $p:X\dashrightarrow \PP^1_\CC$ is a \textit{Lefschetz function (on $X$)} if there exist
 a Lefschetz pencil $f:X\dashrightarrow  \PP^1_{\CC}$ and a rational function $g:\PP^1_{\CC}\to\PP^1_{\CC}$ such that $p = g\circ f$.
\end{defn}
It is clear that Theorem \ref{thm: intro} follows from the following more general statement.  

\begin{thm}\label{thm: main theorem} Let $X$ be a smooth complete intersection of general type over $\CC$. Then the following are equivalent:
\begin{enumerate}
\item the variety $X$ can be defined over $\Qbar$;
\item there exists a Lefschetz pencil $X\dashrightarrow \PP^1_{\CC}$ whose critical points lie in $\PP^1(\Qbar)$;
\item there exists a Lefschetz function $X\dashrightarrow \PP^1_{\CC}$ whose critical points lie in $\PP^1(\Qbar)$;
\item there exists a Lefschetz function $X\dashrightarrow \PP^1_\CC$ with at most three critical points;
\item there exist  a birational morphism $Y\to X$ and a Lefschetz fibration $Y\to B$  whose critical points lie in $\PP^1(\Qbar)$.
\end{enumerate}
\end{thm}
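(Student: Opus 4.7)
My plan is to establish the cycle $(1)\Rightarrow (2)\Rightarrow (3)\Rightarrow (4)\Rightarrow (2)$ together with the loop $(2)\Rightarrow (5)\Rightarrow (1)$; the former rests on Belyi's theorem for $\PP^1$ and its Weil-Grothendieck converse, while the latter combines Lemmas \ref{cor: lefschetz fibn over qbar} and \ref{lem: dominant}. For $(1)\Rightarrow (2)$ I pick a $\Qbar$-model of $X$ and invoke the existence of Lefschetz pencils in characteristic zero \cite[Expos\'e XVII, Th\'eor\`eme 2.5]{SGA7II}; the critical locus of a pencil over $\Qbar$ is a $\Qbar$-subscheme of $\PP^1$, so its critical values lie in $\PP^1(\Qbar)$ after base change to $\CC$. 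The implication $(2)\Rightarrow (3)$ is trivial by taking $g=\mathrm{id}_{\PP^1}$. For $(3)\Rightarrow (4)$: Belyi's theorem applied to the finite $\Qbar$-set $C(p)$ yields a $\Qbar$-rational function $h:\PP^1\to\PP^1$ ramified only over $\{0,1,\infty\}$ with $h(C(p))\subseteq\{0,1,\infty\}$; then $h\circ p=(h\circ g)\circ f$ is a Lefschetz function whose critical values satisfy $C(h\circ p)=C(h)\cup h(C(p))\subseteq\{0,1,\infty\}$.

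The crucial step is $(4)\Rightarrow (2)$. Given a Lefschetz function $p=g\circ f$ with at most three critical values, first apply a M\"obius transformation $\psi\in\PGL_2(\CC)$ so that $C(\psi\circ p)\subseteq\{0,1,\infty\}$; this replaces $g$ by $\psi\circ g$ without touching $f$. The chain rule for branch loci of a composition $\psi\circ p=(\psi\circ g)\circ f$, with $f$ surjective onto $\PP^1$, gives
\[
C(\psi\circ p)\;=\;C(\psi\circ g)\,\cup\,(\psi\circ g)\bigl(C(f)\bigr),
\]
so in particular $\psi\circ g:\PP^1\to\PP^1$ is ramified over at most three points of $\PP^1(\Qbar)$. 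The Weil-Grothendieck converse to Belyi \cite{SGA1, Kock, Weil} then produces $\alpha\in\PGL_2(\CC)$ and $g_0:\PP^1_{\Qbar}\to\PP^1_{\Qbar}$ with $\psi\circ g=g_0\circ\alpha$. Setting $f':=\alpha\circ f$, which is the original Lefschetz pencil on $X$ up to reparametrization of the target, one gets $\psi\circ p=g_0\circ f'$, hence $g_0(C(f'))\subseteq\{0,1,\infty\}$; because $g_0$ is defined over $\Qbar$, the fiber $g_0^{-1}(\{0,1,\infty\})$ consists of $\Qbar$-points, so $C(f')\subseteq\PP^1(\Qbar)$. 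This witnesses (2) with the pencil $f'$.

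For the second loop: $(2)\Rightarrow (5)$ is produced by blowing up the axis of the pencil on $X$, resolving its indeterminacy to a Lefschetz fibration $Y\to\PP^1$ whose critical values coincide with those of the pencil. For $(5)\Rightarrow (1)$: by definition the Lefschetz fibration $Y\to\PP^1$ factors through a Lefschetz pencil on some smooth complete intersection $X'$ birational to $X$ (with $Y\to X'$ the blow-up along the axis); since $X$ is of general type, so is $X'$. Lemma \ref{cor: lefschetz fibn over qbar} applied to $X'$ descends both $X'$ and its pencil to $\Qbar$, whence $Y$ (blown up along the $\Qbar$-rational axis) is defined over $\Qbar$, and Lemma \ref{lem: dominant} applied to the birational morphism $Y\to X$ descends $X$ itself to $\Qbar$.

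The main obstacle is $(4)\Rightarrow (2)$: the three critical values of $p$ are a priori only complex, so one must first normalize them into $\PP^1(\Qbar)$ via a M\"obius transformation before Weil-Grothendieck can be applied, and only then separate the ramification contributions of $g$ and $f$ to conclude. A secondary subtlety in $(5)\Rightarrow (1)$ is that the complete intersection $X'$ underlying the fibration need not be $X$ itself, so both Lemma \ref{cor: lefschetz fibn over qbar} (to descend $X'$) and Lemma \ref{lem: dominant} (to descend $X$ from $Y$) are needed.
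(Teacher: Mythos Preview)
Your proof is correct and rests on the same two lemmas (Lemma~\ref{cor: lefschetz fibn over qbar} and Lemma~\ref{lem: dominant}) as the paper, but the logical route differs in one substantive place. The paper closes a single cycle $(1)\Rightarrow(2)\Rightarrow(3)\Rightarrow(4)\Rightarrow(5)\Rightarrow(1)$ and asserts that $(4)\Rightarrow(5)$ ``follows from the definition of a Lefschetz function''. You instead insert the implication $(4)\Rightarrow(2)$ and prove it by applying the Weil--Grothendieck converse of Belyi to the map $g:\PP^1\to\PP^1$: after a M\"obius transformation sending the three critical values of $p$ into $\{0,1,\infty\}$, you descend $g$ to $\Qbar$ and conclude that the reparametrised pencil $f'=\alpha\circ f$ has its critical values in $\PP^1(\Qbar)$. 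This is a genuine addition rather than a detour. In order to pass from (4)---three critical values of $p=g\circ f$, a priori only in $\PP^1(\CC)$---to a Lefschetz \emph{fibration} whose critical locus lies in $\PP^1(\Qbar)$, one needs precisely that the critical values of the pencil $f$ itself can be arranged to lie in $\PP^1(\Qbar)$; merely blowing up the axis of $f$ and invoking the definition does not give this. Your Weil--Grothendieck step supplies exactly that, so your decomposition makes explicit what the paper's one-line $(4)\Rightarrow(5)$ leaves to the reader. Your treatment of $(5)\Rightarrow(1)$ agrees with the paper's, and you are more careful in distinguishing the complete intersection $X'$ underlying the fibration from $X$ itself before invoking Lemma~\ref{lem: dominant}.
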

\begin{proof} The implication $(1)\implies (2)$ follows from the existence of a Lefschetz pencil on a projective variety over $\Qbar$.

As any Lefschetz pencil on $X$ is a Lefschetz function, the implication $(2) \implies (3)$ follows.

Note that $(3) \implies (4)$ follows from Belyi's  algorithm: for all finite subsets $B$ of $\PP^1(\Qbar)$, there exists a finite morphism $R:\PP^1_{\Qbar}\to \PP^1_{\Qbar}$ such that $R$ ramifies only over $\{0,1,\infty\}$ and $R(B)\subset \{0,1,\infty\}$. 

The implication $(4) \implies (5)$ follows from the definition of a Lefschetz function (Definition \ref{defn: lefschetz}). 

Now,  to prove the theorem, it suffices to prove that $(5)\implies (1)$.  To do so, let $Y\to X$ be a birational morphism and let $f:Y\to \PP^1_{\CC}$ be a Lefschetz fibration such that its set $B$ of critical points lies in $\PP^1(\Qbar)$.  Then, by Lemma  \ref{cor: lefschetz fibn over qbar} the variety $Y$ can be defined over $\Qbar$. Finally, by Lemma  \ref{lem: dominant},  we conclude that $X$ can be defined over $\Qbar$.
\end{proof}

\begin{opm}
As there are only finitely many smooth complete intersections $X$ of general type over $\CC$ such that there exists a Lefschetz pencil $X\dashrightarrow \PP^1_{\CC}$ with at most three critical points (Corollary \ref{cor: lefschetz pencils}), the analogous statement of Theorem \ref{thm: main theorem} in which we replace ``Lefschetz function'' by ``Lefschetz pencil'' is false.
\end{opm}

\bibliography{refsbel}{}
\bibliographystyle{plain}

\end{document}